\renewcommand*{\@fnsymbol}[1]
{\ensuremath{\ifcase#1\or 
*\or \mathsection \or
\ddagger \else\@ctrerr\fi}}
\newtheorem{theorem}{Theorem}
\newtheorem{corollary}[theorem]{Corollary}
\newtheorem{definition}[theorem]{Definition}
\newtheorem{example}[theorem]{Example}
\newtheorem{lemma}[theorem]{Lemma}
\newenvironment{proof}[1][Proof]{\noindent\textbf{#1.} }{\ \rule{0.5em}{0.5em}}
\newcommand*\patchAmsMathEnvironmentForLineno[1]{  \expandafter \let \csname old#1\expandafter \endcsname \csname
#1\endcsname
  \expandafter \let \csname oldend#1\expandafter \endcsname \csname
end#1\endcsname
  \renewenvironment{#1}     {\linenomath \csname old#1\endcsname}     {\csname oldend#1\endcsname \endlinenomath}}
\newcommand*\patchBothAmsMathEnvironmentsForLineno[1]{  \patchAmsMathEnvironmentForLineno{#1}  \patchAmsMathEnvironmentForLineno{#1*}}
\begin{document}

\title{\vspace{-1in}
\parbox{\linewidth}{\footnotesize\noindent
\textbf{}  \hfill 
\newline
} 
\vspace{0pt} \\
Sharp estimates for the unique solution of the Hadamard-type two-point
fractional boundary value problems\thanks{Mathematics Subject Classifications: 34A08, 34A40, 26A33.}}
\date{{ Submitted on 24 April 2020; accepted for publication in \textquotedblleft Applied Mathematics E-Notes\textquotedblright}}

\author{Zaid Laadjal\thanks{Department of Mathematics and Computer Sciences, ICOSI Laboratory, Abbes Laghrour University, Khenchela, 40004, Algeria. Email: zaid.laadjal@yahoo.com}\ , Adjeroud Nacer\thanks{Department of Mathematics and Computer Sciences, ICOSI Laboratory, Abbes Laghrour University, Khenchela, 40004, Algeria. Email: adjeroud\_n@yahoo.fr} }
\maketitle

\begin{abstract}
In this short note, we present the sharp estimate for the existence of a
unique solution for a Hadamard-type fractional differential equations with
two-point boundary value conditions. The method of analysis is obtained by
using the integral of the Green's function and the Banach contraction
principle. Further, we will also obtain a sharper lower bound of the
eigenvalues for an eigenvalue problem. Two examples are presented to clarify
the applicability of the essential results.
\end{abstract}

\section{Introduction and preliminaries}

In the book \cite{Kelley} Kelley and Peterson considered the following
classical two-point boundary value problems: 
\begin{equation}
\left\{ 
\begin{array}{ll}
u^{\prime \prime }(x)={\mathcal{F}}(x,u(x)), & a<x<b, \\ 
u(a)=A,\text{ }u(b)=B, & A,B\in \mathds{R},
\end{array}
\right.   \label{1}
\end{equation}%
where $a,b\in \mathds{R},$\ and they included the following result:

\begin{theorem}[\protect\cite{Kelley}, Theorem 7.7]
Let ${\mathcal{F}}:[a,b]\times \mathds{R}\rightarrow \mathds{R}$ be a continuous function satisfying the assumption:

\begin{itemize}
\item[$(H_{1})$] There exists $K>0$ such that $\left\vert {\mathcal{F}}%
(x,\omega )-{\mathcal{F}}(x,\varpi )\right\vert \leq K\left\vert \omega
-\varpi \right\vert $ for all $(x,\omega ),(x,\varpi )\in \lbrack a,b]\times \mathds{R}.$
\end{itemize}

Then the boundary value problem (\ref{1}) has a unique solution on $[a,b]$
if $b-a<2\sqrt{2/K}.$
\end{theorem}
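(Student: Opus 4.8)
The plan is to recast the boundary value problem $(\ref{1})$ as a fixed point equation on the Banach space $X=C[a,b]$ with norm $\|u\|=\max_{x\in[a,b]}|u(x)|$, and then invoke the Banach contraction principle. First I would introduce the Green's function for $-u''$ subject to the homogeneous conditions $u(a)=u(b)=0$, namely
\[
G(x,s)=\frac{1}{b-a}\left\{
\begin{array}{ll}
(x-a)(b-s), & a\le x\le s\le b,\\
(s-a)(b-x), & a\le s\le x\le b.
\end{array}\right.
\]
Since $u''(x)=\mathcal F(x,u(x))$ is the same as $-u''(x)=-\mathcal F(x,u(x))$, a function $u\in C^{2}[a,b]$ solves $(\ref{1})$ if and only if
\[
u(x)=A+\frac{B-A}{b-a}(x-a)-\int_{a}^{b}G(x,s)\,\mathcal F\bigl(s,u(s)\bigr)\,ds .
\]
Because $\mathcal F$ and $G$ are continuous, the right-hand side defines an operator $T:X\to X$, and solving $(\ref{1})$ is equivalent to finding a fixed point of $T$.

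The next step is to show that $T$ is a contraction. For $u,v\in X$ the affine term cancels, so by hypothesis $(H_{1})$,
\[
\bigl|(Tu)(x)-(Tv)(x)\bigr|\le\int_{a}^{b}G(x,s)\,\bigl|\mathcal F(s,u(s))-\mathcal F(s,v(s))\bigr|\,ds\le K\|u-v\|\int_{a}^{b}G(x,s)\,ds .
\]
The one computation that needs a little care is the explicit evaluation of $\int_{a}^{b}G(x,s)\,ds$; splitting the integral at $s=x$ and integrating the two resulting linear pieces gives
\[
\int_{a}^{b}G(x,s)\,ds=\frac{(x-a)(b-x)}{2},
\]
whose maximum over $x\in[a,b]$ is attained at the midpoint $x=\frac{a+b}{2}$ and equals $\frac{(b-a)^{2}}{8}$. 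Hence $\|Tu-Tv\|\le\frac{K(b-a)^{2}}{8}\,\|u-v\|$.

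Finally, the condition $b-a<2\sqrt{2/K}$ is precisely $\frac{K(b-a)^{2}}{8}<1$, so $T$ is a contraction on the complete metric space $X$; the Banach fixed point theorem then yields a unique fixed point, i.e.\ a unique solution of $(\ref{1})$ on $[a,b]$. The main point — and the reason the estimate is sharp — is not the functional-analytic packaging but the decision to bound $\int_{a}^{b}G(x,s)\,|\mathcal F(s,u)-\mathcal F(s,v)|\,ds$ by $K\|u-v\|\int_{a}^{b}G(x,s)\,ds$ and to compute this last integral exactly, rather than using the coarser estimate $\int_{a}^{b}G(x,s)\,ds\le(b-a)\max_{x,s}G(x,s)=\frac{(b-a)^{2}}{4}$; retaining the constant $8$ instead of $4$ is exactly what produces the factor $2\sqrt{2}$ in the statement, and the same idea is what will be adapted to the Hadamard-type problem in the sequel.
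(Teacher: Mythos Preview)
Your argument is correct. Note, however, that the paper does not give its own proof of this theorem: it is quoted from Kelley--Peterson as a classical background result, so there is no ``paper's proof'' to compare against directly. That said, your approach is precisely the template the paper follows for its own main results (Lemma~\ref{l4} and Theorem~\ref{Th.4}): write the solution via the Green's function, reduce to a fixed-point equation, bound $|Tu-Tv|$ by $K\|u-v\|\int_a^b G(x,s)\,ds$, and compute $\max_x\int_a^b G(x,s)\,ds$ exactly rather than via $\max_{x,s}G(x,s)\cdot(b-a)$. Your closing remark about the constant $8$ versus $4$ is exactly the point the authors exploit in the Hadamard setting, so your sketch aligns perfectly with the paper's methodology.
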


Ferreira in 2016 \cite{Ferreira-1} discussed the existence and uniqueness of
solutions for the following fractional boundary value problems with
Reimman-Liouville fractional derivative: 
\begin{equation}
\left\{ 
\begin{array}{ll}
^\mathcal{R}\mathfrak{D}_{a}^{\sigma }u(x)=-{\mathcal{F}}(x,u(x)), & 
a<x<b,1<\sigma \leq 2, \\ 
u(a)=0,\text{\ }u(b)=B, & B\in \mathds{R},
\end{array}
\right.  \label{2}
\end{equation}
so he included this result given by:

\begin{theorem}[\protect\cite{Ferreira-1}]
Let ${\mathcal{F}}:[a,b]\times \mathds{R}\rightarrow \mathds{R}$ be a continuous function satisfying the assumption $(H_{1}).$ Then the
boundary value problem (\ref{2}) has a unique solution on $[a,b]$ if $
b-a<(\sigma ^{(\sigma +1)/\sigma }\Gamma ^{1/\sigma }(\sigma ))/(K^{1/\sigma
}(\sigma -1)^{(\sigma -1)/\sigma }).$
\end{theorem}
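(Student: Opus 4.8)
The plan is to recast the boundary value problem (\ref{2}) as a fixed-point equation for an integral operator and then apply the Banach contraction principle, keeping track of the \emph{exact} value of the integral of the Green's function so that the resulting threshold is sharp. First I would show that a continuous $u$ solves (\ref{2}) if and only if
\[
u(x)=B\,\frac{(x-a)^{\sigma-1}}{(b-a)^{\sigma-1}}+\int_{a}^{b}G(x,s)\,{\mathcal{F}}(s,u(s))\,ds,
\]
where
\[
G(x,s)=\frac{1}{\Gamma(\sigma)}
\begin{cases}
\dfrac{(x-a)^{\sigma-1}(b-s)^{\sigma-1}}{(b-a)^{\sigma-1}}-(x-s)^{\sigma-1}, & a\le s\le x\le b,\\[2ex]
\dfrac{(x-a)^{\sigma-1}(b-s)^{\sigma-1}}{(b-a)^{\sigma-1}}, & a\le x\le s\le b.
\end{cases}
\]
This is obtained by applying the Riemann--Liouville fractional integral of order $\sigma$ to the equation, writing the general solution with its two free constants $c_{1}(x-a)^{\sigma-1}+c_{2}(x-a)^{\sigma-2}$, discarding the singular mode ($c_{2}=0$, forced by continuity at $x=a$), and matching $u(b)=B$. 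I would also record that $G\ge 0$ on $[a,b]^{2}$: in the first branch the numerator equals $(x-a)^{\sigma-1}(b-s)^{\sigma-1}-(x-s)^{\sigma-1}(b-a)^{\sigma-1}$, which is nonnegative since $(x-a)(b-s)-(x-s)(b-a)=(s-a)(b-x)\ge 0$ and $t\mapsto t^{\sigma-1}$ is nondecreasing.

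Next I would work in $C[a,b]$ with the supremum norm $\|\cdot\|$ and let $T$ be the operator defined by the right-hand side above; continuity of ${\mathcal{F}}$ and $G$ gives $T:C[a,b]\to C[a,b]$. For $u,v\in C[a,b]$, assumption $(H_{1})$ yields
\[
|Tu(x)-Tv(x)|\le\int_{a}^{b}G(x,s)\,|{\mathcal{F}}(s,u(s))-{\mathcal{F}}(s,v(s))|\,ds\le K\,\|u-v\|\int_{a}^{b}G(x,s)\,ds.
\]
The crucial computation is the closed form of the integral. Splitting it and using $\int_{a}^{b}(b-s)^{\sigma-1}ds=(b-a)^{\sigma}/\sigma$ together with $\int_{a}^{x}(x-s)^{\sigma-1}ds=(x-a)^{\sigma}/\sigma$, the terms collapse to
\[
\int_{a}^{b}G(x,s)\,ds=\frac{(x-a)^{\sigma-1}(b-x)}{\Gamma(\sigma+1)}.
\]

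Then I would maximize $\varphi(x)=(x-a)^{\sigma-1}(b-x)$ on $[a,b]$. Differentiating, the only interior critical point is $x^{*}=a+\frac{\sigma-1}{\sigma}(b-a)$, where $\varphi(x^{*})=\frac{(\sigma-1)^{\sigma-1}}{\sigma^{\sigma}}(b-a)^{\sigma}$, and this is the maximum. Hence
\[
\max_{x\in[a,b]}\int_{a}^{b}G(x,s)\,ds=\frac{(\sigma-1)^{\sigma-1}(b-a)^{\sigma}}{\sigma^{\sigma+1}\Gamma(\sigma)},
\]
so $\|Tu-Tv\|\le\Lambda\|u-v\|$ with $\Lambda=K(\sigma-1)^{\sigma-1}(b-a)^{\sigma}/(\sigma^{\sigma+1}\Gamma(\sigma))$. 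A direct rearrangement shows that $\Lambda<1$ is exactly equivalent to $b-a<\sigma^{(\sigma+1)/\sigma}\Gamma^{1/\sigma}(\sigma)/(K^{1/\sigma}(\sigma-1)^{(\sigma-1)/\sigma})$, the stated hypothesis. Under this condition $T$ is a contraction, and the Banach fixed point theorem produces a unique $u\in C[a,b]$ with $Tu=u$, that is, the unique solution of (\ref{2}).

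The step I expect to require the most care is the first one: deriving the Green's function for the Riemann--Liouville problem (correctly handling the two constants in the general solution and the singular $(x-a)^{\sigma-2}$ mode) and proving $G\ge 0$. Nonnegativity is precisely what lets me replace $|G|$ by $G$ inside the integral and recover the \emph{sharp} constant rather than a weaker bound; the one-variable optimization and the algebra that converts $\Lambda<1$ into the explicit restriction on $b-a$ are then routine.
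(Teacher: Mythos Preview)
Your proof is correct and is precisely the Riemann--Liouville analogue of the argument the paper carries out for its Hadamard result: compute the Green's function, show $G\ge 0$, evaluate $\int_a^b G(x,s)\,ds$ in closed form, maximize the resulting one-variable expression, and feed the exact constant into the Banach contraction principle. Note that the paper itself does not prove this particular statement---it is quoted from \cite{Ferreira-1}---but your steps match line-for-line what the paper does in Lemma~\ref{l4} and Theorem~\ref{Th.4} for the Hadamard case, with $\ln(x/a)$ replaced by $x-a$ throughout.
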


In 2019, Ferreira \cite{Ferreira-2} corrected a recent uniqueness result 
\cite{Bashir} for a two-point fractional boundary value problem with Caputo
derivative:
\begin{equation}
\left\{ 
\begin{array}{ll}
^{\mathcal{C}}{\mathfrak{D}}_{a}^{\sigma }u(x)=-{\mathcal{F}}(x,u(x)), & 
a<x<b,1<\sigma \leq 2, \\ 
u(a)=A,\text{\ }u(b)=B, & A,B\in \mathds{R},
\end{array}%
\right.   \label{3}
\end{equation}
and he came to the following theorem:

\begin{theorem}[\protect\cite{Ferreira-2}]
Let ${\mathcal{F}}:[a,b]\times \mathds{R}\rightarrow \mathds{R}$ be a continuous function satisfying the assumption $(H_{1}).$ Then the
boundary value problem (\ref{3}) has a unique solution on $[a,b]$ if $
M(\sigma ,a,b)<1/K,$ where 
\begin{equation*}
M(\sigma ,a,b)=\frac{1}{\Gamma (\sigma +1)}\underset{x\in \lbrack a,b]}{\max 
}\Big(-2(x-\varphi (x))^{\sigma }+2\frac{(x-a)(b-\varphi (x))^{\sigma }}{b-a}
+(x-a)^{\sigma }-(x-a)(b-a)^{\sigma -1}\Big),
\end{equation*}
with $\varphi (x)=\left( (\frac{x-a}{b-a})^{\frac{1}{\sigma -1}}b-1\right)
/\left( (\frac{x-a}{b-a})^{\frac{1}{\sigma -1}}-1\right) .$
\end{theorem}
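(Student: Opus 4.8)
The plan is to recast the boundary value problem (\ref{3}) as a fixed-point equation for an integral operator built from the associated Green's function, and then apply the Banach contraction principle. First I would verify that $u\in C[a,b]$ solves (\ref{3}) if and only if
\[
u(x)=\ell(x)+\int_{a}^{b}G(x,s)\,{\mathcal{F}}(s,u(s))\,ds,
\]
where $\ell(x)=A+\frac{B-A}{b-a}(x-a)$ carries the boundary data (the Caputo derivative of order $\sigma\in(1,2]$ annihilates every affine function), and
\[
G(x,s)=\frac{1}{\Gamma(\sigma)}\left\{
\begin{array}{ll}
\dfrac{(x-a)(b-s)^{\sigma-1}}{b-a}-(x-s)^{\sigma-1}, & a\leq s\leq x, \\[3mm]
\dfrac{(x-a)(b-s)^{\sigma-1}}{b-a}, & x\leq s\leq b,
\end{array}
\right.
\]
is the Green's function of ${}^{\mathcal{C}}{\mathfrak{D}}_{a}^{\sigma}$ under homogeneous two-point conditions. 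This follows by applying the fractional integral $I_{a}^{\sigma}$ to the equation, using the identity $I_{a}^{\sigma}\,{}^{\mathcal{C}}{\mathfrak{D}}_{a}^{\sigma}u(x)=u(x)-u(a)-u'(a)(x-a)$, and eliminating $u'(a)$ through $u(b)=B$.

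Next, define $T:C[a,b]\rightarrow C[a,b]$ by $(Tu)(x)=\ell(x)+\int_{a}^{b}G(x,s)\,{\mathcal{F}}(s,u(s))\,ds$, whose fixed points are precisely the solutions of (\ref{3}). For $u,v\in C[a,b]$, hypothesis $(H_{1})$ gives
\[
\left\vert (Tu)(x)-(Tv)(x)\right\vert \leq \int_{a}^{b}\left\vert G(x,s)\right\vert \left\vert {\mathcal{F}}(s,u(s))-{\mathcal{F}}(s,v(s))\right\vert ds\leq K\left\Vert u-v\right\Vert _{\infty}\int_{a}^{b}\left\vert G(x,s)\right\vert ds,
\]
hence $\left\Vert Tu-Tv\right\Vert _{\infty}\leq K\big(\max_{x\in[a,b]}\int_{a}^{b}\left\vert G(x,s)\right\vert ds\big)\left\Vert u-v\right\Vert _{\infty}$. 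Thus $T$ is a contraction --- and (\ref{3}) has a unique solution on $[a,b]$ --- as soon as $\max_{x\in[a,b]}\int_{a}^{b}\left\vert G(x,s)\right\vert ds<1/K$, so it only remains to show that this maximum equals $M(\sigma,a,b)$.

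This last identification is the crux, since $G(x,\cdot)$ is not of one sign. On $[a,x]$ one can write $G(x,s)=\frac{(b-s)^{\sigma-1}}{\Gamma(\sigma)}\big(\frac{x-a}{b-a}-(\frac{x-s}{b-s})^{\sigma-1}\big)$; since $s\mapsto(\frac{x-s}{b-s})^{\sigma-1}$ is strictly decreasing on $[a,x]$ with value $(\frac{x-a}{b-a})^{\sigma-1}>\frac{x-a}{b-a}$ at $s=a$ and value $0$ at $s=x$, the factor in parentheses has a single zero $s=\varphi(x)\in(a,x)$ (the $\varphi(x)$ of the statement), with $G(x,\cdot)$ negative on $(a,\varphi(x))$ and positive on $(\varphi(x),x)$, while the branch on $[x,b]$ is nonnegative. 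Therefore
\[
\int_{a}^{b}\left\vert G(x,s)\right\vert ds=\int_{a}^{b}G(x,s)\,ds-2\int_{a}^{\varphi(x)}G(x,s)\,ds,
\]
and both integrals are elementary: $\int_{a}^{b}G(x,s)\,ds=\frac{1}{\Gamma(\sigma+1)}\big((x-a)(b-a)^{\sigma-1}-(x-a)^{\sigma}\big)$ (equivalently, the solution of ${}^{\mathcal{C}}{\mathfrak{D}}_{a}^{\sigma}w=-1$ with homogeneous data), while integrating the first branch over $[a,\varphi(x)]$ gives $\frac{1}{\Gamma(\sigma+1)}\big((x-a)(b-a)^{\sigma-1}-\frac{(x-a)(b-\varphi(x))^{\sigma}}{b-a}-(x-a)^{\sigma}+(x-\varphi(x))^{\sigma}\big)$.

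Substituting and simplifying yields
\[
\int_{a}^{b}\left\vert G(x,s)\right\vert ds=\frac{1}{\Gamma(\sigma+1)}\Big(-2(x-\varphi(x))^{\sigma}+2\frac{(x-a)(b-\varphi(x))^{\sigma}}{b-a}+(x-a)^{\sigma}-(x-a)(b-a)^{\sigma-1}\Big),
\]
and maximizing over $x\in[a,b]$ produces exactly $M(\sigma,a,b)$ (the endpoints $x=a,b$, where $\varphi$ degenerates, are covered by continuity of $x\mapsto\int_{a}^{b}\left\vert G(x,s)\right\vert ds$). Combining this with the contraction estimate and invoking the Banach fixed-point theorem finishes the proof. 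The main obstacle is precisely the sign analysis of $G$ and the bookkeeping in the last computation; the reformulation as a fixed-point problem and the contraction estimate are routine.
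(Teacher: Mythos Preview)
The paper does not prove this theorem: it is quoted in the introduction as a known result from Ferreira \cite{Ferreira-2}, so there is no in-paper proof to compare your proposal against. Your argument is the natural one and is essentially what appears in the cited reference: reformulate via the Caputo Green's function, apply the Lipschitz bound to get a contraction with constant $K\max_{x}\int_{a}^{b}|G(x,s)|\,ds$, and identify that maximum with $M(\sigma,a,b)$ by splitting at the unique sign change $\varphi(x)$ of $G(x,\cdot)$ on $[a,x]$.

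One small remark: the zero you locate satisfies $\frac{x-s}{b-s}=\bigl(\frac{x-a}{b-a}\bigr)^{1/(\sigma-1)}$, which solves to $\varphi(x)=\dfrac{\bigl(\frac{x-a}{b-a}\bigr)^{1/(\sigma-1)}b-x}{\bigl(\frac{x-a}{b-a}\bigr)^{1/(\sigma-1)}-1}$. The ``$-1$'' in the numerator of the statement (in place of ``$-x$'') is a transcription typo carried over in this paper; your derivation gives the correct formula, and with it your computation of $\int_{a}^{b}|G(x,s)|\,ds$ and the resulting $M(\sigma,a,b)$ is exactly right. The endpoint degeneracies ($x=a$ gives $G\equiv 0$; $x=b$ gives $\varphi(b)=a$ and $G\geq 0$) are handled by continuity as you indicate.
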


See \cite{Ferreira-1,Ferreira-2} and references therein for more details.

In the last few decades, the differential equations involving a fractional
order have witnessed a wide attention from the researchers, because they
were extensively implemented in daily life and in various scientific and
technological fields and in many branches including physics, biology,
chemistry, economics, astronomy, control theory, viscoelastic materials,
robotics, signal processing, electromagnetism, electrodynamics of complex
medium, anomalous diffusion and fractured media, electromagnetism, potential
theory and electro statistics, polymer rheology, and aerodynamics, ... etc,
we refer the interested reader to paper \cite{bib6}, and the references contained therein.

It is well known that the existence of solution plays a main important role
in the theory and applications of fractional differential equations with
boundary conditions. Recently, many researchers are interested in studying
the Hadamard-type fractional boundary value problems, where there are
several results about the existence of solutions for the differential
equations with Hadamard derivative, we refer the reader to the book \cite
{Book.Hadamard} that contains the most important works that have been
published in this domain. In addition, some researchers are interested in studying the stability of solutions to fractional differential equations, including Laypunov stability, exponential stability, Mittag-Leffler stability, and Hyers–Ulam stability, have been introduced. Among these concepts, Hyers–Ulam stability analysis was recognized as a simple method of investigation. We refer the readers to \cite{1,2,3,4,5,6},  and the references contained therein.

Motivated by the above mentioned works and the papers \cite{Zaid,Ma}, in
this paper, we investigated the sharp estimate for the unique solution of
the following fractional differential equation with Hadamard derivative: 
\begin{equation}
\left\{ 
\begin{array}{ll}
^{\mathcal{H}}{\mathfrak{D}}_{a}^{\sigma }u(x)=-{\mathcal{F}}(x,u(x)), & 
0<a<x<b,1<\sigma \leq 2, \\ 
u(a)=0,\text{ \ }u(b)=B, & B\in \mathds{R},
\end{array}
\right.  \label{7}
\end{equation}
where ${\mathcal{F}}$ is a given function, $^{\mathcal{H}}{\mathfrak{D}}
_{a}^{\sigma }$ denotes the Hadamard fractional derivative of order $\sigma $
, and $B$ is real constant. Further, we will also obtain a sharp estimate
for lower bound for the eigenvalues for the follwing eigenvalue problem
\begin{equation}
\left\{ 
\begin{array}{ll}
^{\mathcal{H}}{\mathfrak{D}}_{a}^{\sigma }u(x)=\lambda u(x), & 
0<a<x<b,1<\sigma \leq 2, \\ 
u(a)=0=u(b). & 
\end{array}
\right.  \label{v-p}
\end{equation}

We start now to present some fundamental definitions and lemmas which will
be used in this work.

\begin{definition}[\protect\cite{Book.Hadamard,Kilbas}]
\label{dif1}Let $0<a\leq b$ and $\sigma \in \mathds{R}^{+}$ where$\
n-1<\sigma \leq n$ with $n\in\mathds{N}.$ The Hadamard fractional integral of ordre $\sigma $ for a function $g$ is
defined by: $^{\mathcal{H}}{\mathcal{I}}_{a}^{0}g(x)=g(x)$ and 
\begin{equation}
^{\mathcal{H}}{\mathcal{I}}_{a}^{\sigma }g(x)=\frac{1}{\Gamma (\sigma )}
\int_{a}^{x}\left( \ln \frac{x}{\tau }\right) ^{\sigma -1}g(\tau )\frac{
d\tau }{\tau }\text{\ for }\sigma >0.  \label{8}
\end{equation}
\end{definition}

\begin{definition}[\protect\cite{Book.Hadamard,Kilbas}]
\label{dif2} {\small \ } Let $0<a<b;$ $\delta =x\frac{d}{dx}$ and let $
AC[a,b]$ be the space of functions $g$ which are absolutely continuous on $
[a,b],$ and $AC_{\delta }^{n}[a,b]=\{g:[a,b]\times \mathds{R}\rightarrow \mathds{R}$ s.t. $\delta ^{n-1}[g(x)]\in AC[a,b]\}.$ The Hadamard fractional
derivative of order $\sigma \geq 0$ for a function $g\in AC_{\delta
}^{n}[a,b]$ is defined by: $^{\mathcal{H}}{\mathfrak{D}}_{a}^{0}g(x)=g(x),$
and 
\begin{equation}
^{\mathcal{H}}{\mathfrak{D}}_{a}^{\sigma }g(x)=\frac{1}{\Gamma (n-\sigma )}
\left( x\frac{d}{dx}\right) ^{n}\int_{a}^{x}\left( \ln \frac{x}{\tau }
\right) ^{n-\sigma -1}g(\tau )\frac{d\tau }{\tau }\text{\ for }\sigma >0,
\label{9}
\end{equation}
where\ $n-1<\sigma \leq n,$ $n\in \mathds{N}.$
\end{definition}

\begin{lemma}[\protect\cite{Book.Hadamard,Kilbas}]
\label{l1} Let\ $0<a\leq b$, and $\sigma >0$\ where\ $n-1<\sigma \leq n$, $
n\in \mathds{N}$.\ The differential equation\ $^{\mathcal{H}}{\mathfrak{D}}_{a}^{\sigma
}u(x)=0$\ has the general solution: 
\begin{equation}
u(x)=\sum_{i=1}^{i=n}c_{i}\left( \ln \frac{x}{a}\right) ^{\sigma -i},\;x\in
\lbrack a,b],  \label{10}
\end{equation}%
where $c_{i}\in \mathds{R}$ $(i=1,...,n)$ are arbitrary constants. And
moreover 
\begin{equation}
^{\mathcal{H}}{\mathcal{I}}_{a}^{\sigma }{}^{\mathcal{H}}{\mathfrak{D}}
_{a}^{\sigma }u(x)=u(x)+\sum_{i=1}^{i=n}c_{i}\left( \ln \frac{x}{a}\right)
^{\sigma -i}.  \label{11}
\end{equation}
\end{lemma}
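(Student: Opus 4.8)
The plan is to reduce the whole statement to the classical Riemann--Liouville theory via the change of variables $t=\ln x$. Set $\alpha=\ln a$, $\gamma=\ln b$, and to a function $g$ on $[a,b]$ associate $G(t):=g(e^{t})$ on $[\alpha,\gamma]$. Substituting $\tau=e^{s}$ in \eqref{8} (so that $d\tau/\tau=ds$ and $\ln(x/\tau)=t-s$) gives $^{\mathcal{H}}{\mathcal{I}}_{a}^{\sigma}g(x)=(I_{\alpha}^{\sigma}G)(t)$, where $I_{\alpha}^{\sigma}$ denotes the left Riemann--Liouville integral of order $\sigma$ on $[\alpha,\gamma]$. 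Since $\delta=x\,d/dx$ becomes $d/dt$ under this substitution, the same computation applied to \eqref{9} yields $^{\mathcal{H}}{\mathfrak{D}}_{a}^{\sigma}g(x)=(D_{\alpha}^{\sigma}G)(t)$, and $g\in AC_{\delta}^{n}[a,b]$ precisely when $G\in AC^{n}[\alpha,\gamma]$. Under this dictionary, \eqref{10} and \eqref{11} are the images of two standard Riemann--Liouville facts: $D_{\alpha}^{\sigma}G=0$ iff $G(t)=\sum_{i=1}^{n}\tilde c_{i}(t-\alpha)^{\sigma-i}$, and $I_{\alpha}^{\sigma}D_{\alpha}^{\sigma}G(t)=G(t)-\sum_{i=1}^{n}\frac{[D_{\alpha}^{\sigma-i}G](\alpha^{+})}{\Gamma(\sigma-i+1)}(t-\alpha)^{\sigma-i}$.

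In detail I would first record the power rule $^{\mathcal{H}}{\mathcal{I}}_{a}^{\sigma}\big(\ln\tfrac{x}{a}\big)^{\mu-1}=\frac{\Gamma(\mu)}{\Gamma(\mu+\sigma)}\big(\ln\tfrac{x}{a}\big)^{\mu+\sigma-1}$ for $\mu>0$, which follows from the Beta integral after the substitution above (and is exactly the image of the Riemann--Liouville power rule). Taking $\mu=\sigma-i+1$ for $i=1,\dots,n$ — note $\mu>n-i\ge0$ because $\sigma>n-1$ — this shows $^{\mathcal{H}}{\mathcal{I}}_{a}^{n-\sigma}\big(\ln\tfrac{x}{a}\big)^{\sigma-i}$ is a constant multiple of $\big(\ln\tfrac{x}{a}\big)^{n-i}$, a polynomial in $\ln\tfrac{x}{a}$ of degree at most $n-1$, whence $^{\mathcal{H}}{\mathfrak{D}}_{a}^{\sigma}\big(\ln\tfrac{x}{a}\big)^{\sigma-i}=\delta^{n}\,{}^{\mathcal{H}}{\mathcal{I}}_{a}^{n-\sigma}\big(\ln\tfrac{x}{a}\big)^{\sigma-i}=0$; so each function in \eqref{10} solves the equation. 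For the converse, $^{\mathcal{H}}{\mathfrak{D}}_{a}^{\sigma}u=0$ means $\delta^{n}\big({}^{\mathcal{H}}{\mathcal{I}}_{a}^{n-\sigma}u\big)=0$, so in the $t$-variable $I_{\alpha}^{n-\sigma}G$ is a polynomial in $(t-\alpha)$ of degree $\le n-1$; applying $D_{\alpha}^{n-\sigma}$, using its left-inverse property together with the power rule term by term, produces $G(t)=\sum_{i=1}^{n}\tilde c_{i}(t-\alpha)^{\sigma-i}$, i.e.\ \eqref{10}. Finally, \eqref{11} follows without even invoking the Riemann--Liouville composition formula: from the known left-inverse relation $^{\mathcal{H}}{\mathfrak{D}}_{a}^{\sigma}\,{}^{\mathcal{H}}{\mathcal{I}}_{a}^{\sigma}=\mathrm{id}$ and linearity we get $^{\mathcal{H}}{\mathfrak{D}}_{a}^{\sigma}\big({}^{\mathcal{H}}{\mathcal{I}}_{a}^{\sigma}{}^{\mathcal{H}}{\mathfrak{D}}_{a}^{\sigma}u-u\big)=0$, so the already-proved first part forces $^{\mathcal{H}}{\mathcal{I}}_{a}^{\sigma}{}^{\mathcal{H}}{\mathfrak{D}}_{a}^{\sigma}u-u=\sum_{i=1}^{n}c_{i}\big(\ln\tfrac{x}{a}\big)^{\sigma-i}$ for suitable constants $c_{i}$, which is \eqref{11}.

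The step I expect to require the most care is not any single computation but the bookkeeping of function spaces: the left-inverse identity $^{\mathcal{H}}{\mathfrak{D}}_{a}^{\sigma}\,{}^{\mathcal{H}}{\mathcal{I}}_{a}^{\sigma}g=g$, the deduction ``$\delta^{n}\big({}^{\mathcal{H}}{\mathcal{I}}_{a}^{n-\sigma}u\big)=0\Rightarrow{}^{\mathcal{H}}{\mathcal{I}}_{a}^{n-\sigma}u$ is a polynomial in $\ln\tfrac{x}{a}$'', and the term-by-term application of $D_{\alpha}^{n-\sigma}$ are all legitimate only for $u$ (and the auxiliary integrals built from it) lying in suitable spaces — here $u\in AC_{\delta}^{n}[a,b]$ with $^{\mathcal{H}}{\mathfrak{D}}_{a}^{\sigma}u$ continuous, matching $G\in AC^{n}[\alpha,\gamma]$ on the Riemann--Liouville side. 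Once one checks that the logarithmic substitution is a bijection of $AC_{\delta}^{n}[a,b]$ onto $AC^{n}[\alpha,\gamma]$ intertwining $^{\mathcal{H}}{\mathcal{I}}_{a}^{\sigma}\leftrightarrow I_{\alpha}^{\sigma}$ and $^{\mathcal{H}}{\mathfrak{D}}_{a}^{\sigma}\leftrightarrow D_{\alpha}^{\sigma}$, the remainder of the argument is a faithful transcription of the Riemann--Liouville theory, so no genuinely new difficulty arises.
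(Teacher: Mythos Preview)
Your argument is correct: the logarithmic substitution $t=\ln x$ is the standard device that converts Hadamard integrals and derivatives into Riemann--Liouville ones, and once that dictionary is in place both \eqref{10} and \eqref{11} become transcriptions of the classical Riemann--Liouville facts. Your derivation of \eqref{11} from \eqref{10} via $^{\mathcal{H}}{\mathfrak{D}}_{a}^{\sigma}\,{}^{\mathcal{H}}{\mathcal{I}}_{a}^{\sigma}=\mathrm{id}$ is clean, and your remarks about the function-space bookkeeping ($AC_{\delta}^{n}[a,b]\leftrightarrow AC^{n}[\alpha,\gamma]$) correctly identify where the technical care is needed.

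As for comparison with the paper: there is nothing to compare against. The paper does not prove Lemma~\ref{l1}; it is stated as a known result and attributed to \cite{Book.Hadamard,Kilbas}. Your write-up supplies exactly the proof those references contain (the logarithmic change of variables is precisely how Kilbas--Srivastava--Trujillo establish the Hadamard analogues of the Riemann--Liouville identities), so you have reproduced the standard argument rather than found an alternative route.
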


\begin{lemma}
\label{l3} Let $y\in C([a,b],\mathds{R})\cap L^{1}([a,b],\mathds{R})$,\ the solution of following linear fractional boundary value problem 
\begin{equation}
\left\{ 
\begin{array}{ll}
^{\mathcal{H}}{\mathfrak{D}}_{a}^{\sigma }u(x)=-y(x), & 0<a<x<b,1<\sigma
\leq 2, \\ 
u(a)=0,\text{ }u(b)=B, & B\in \mathds{R},
\end{array}
\right.   \label{13}
\end{equation}
is given by 
\begin{equation*}
u(x)=\int_{a}^{b}G(x,\tau )y(\tau )d\tau +B\left( \frac{\ln \frac{x}{a}}{\ln 
\frac{b}{a}}\right) ^{\sigma -1},
\end{equation*}
where 
\begin{equation}
G(x,\tau )=\frac{1}{\Gamma (\sigma )}\left\{ 
\begin{array}{ll}
\left( \frac{\ln \frac{x}{a}}{\ln \frac{b}{a}}\right) ^{\sigma -1}\left( \ln 
\frac{b}{\tau }\right) ^{\sigma -1}\frac{1}{\tau }-\left( \ln \frac{x}{\tau}
\right) ^{\sigma -1}\frac{1}{\tau }, & a\leq \tau \leq x\leq b, \\ 
&  \\ 
\left( \frac{\ln \frac{x}{a}}{\ln \frac{b}{a}}\right) ^{\sigma -1}\left( \ln 
\frac{b}{\tau }\right) ^{\sigma -1}\frac{1}{\tau }, & a\leq x\leq \tau \leq
b.
\end{array}
\right.   \label{15}
\end{equation}
\end{lemma}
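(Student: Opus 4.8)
The plan is to convert the linear problem (\ref{13}) into an equivalent integral equation by inverting the Hadamard fractional derivative, then to fix the two free constants using the boundary data $u(a)=0$ and $u(b)=B$, and finally to reorganize the resulting expression so that its kernel is precisely the function $G(x,\tau)$ in (\ref{15}).

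First I would apply the Hadamard fractional integral $^{\mathcal{H}}{\mathcal{I}}_{a}^{\sigma}$ to both sides of $^{\mathcal{H}}{\mathfrak{D}}_{a}^{\sigma}u(x)=-y(x)$. Since $1<\sigma\leq 2$ we are in the case $n=2$, so Lemma~\ref{l1} (formula (\ref{11})) produces constants $c_{1},c_{2}\in\mathds{R}$ with
\begin{equation*}
u(x)=-\frac{1}{\Gamma(\sigma)}\int_{a}^{x}\left(\ln\frac{x}{\tau}\right)^{\sigma-1}y(\tau)\frac{d\tau}{\tau}+c_{1}\left(\ln\frac{x}{a}\right)^{\sigma-1}+c_{2}\left(\ln\frac{x}{a}\right)^{\sigma-2}.
\end{equation*}
I would then observe that $u(a)=0$ forces $c_{2}=0$: for $\sigma<2$ the exponent $\sigma-2$ is negative and the term $c_{2}\left(\ln\frac{x}{a}\right)^{\sigma-2}$ is unbounded as $x\to a^{+}$ unless $c_{2}=0$, while for $\sigma=2$ that term is the constant $c_{2}$, which must then vanish; the surviving term $c_{1}\left(\ln\frac{x}{a}\right)^{\sigma-1}$ already tends to $0$ as $x\to a^{+}$, so no constraint on $c_{1}$ arises here. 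Imposing $u(b)=B$ next gives
\begin{equation*}
c_{1}=\frac{1}{\left(\ln\frac{b}{a}\right)^{\sigma-1}}\left(B+\frac{1}{\Gamma(\sigma)}\int_{a}^{b}\left(\ln\frac{b}{\tau}\right)^{\sigma-1}y(\tau)\frac{d\tau}{\tau}\right).
\end{equation*}

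Substituting these constants back, I would isolate the term $B\left(\ln\frac{x}{a}/\ln\frac{b}{a}\right)^{\sigma-1}$ and, in the remaining integral contribution of $c_{1}$, split $\int_{a}^{b}=\int_{a}^{x}+\int_{x}^{b}$. Collecting the coefficient of $y(\tau)$ on $[a,x]$ gives exactly the first branch of (\ref{15}) and the coefficient on $[x,b]$ gives the second branch, which yields $u(x)=\int_{a}^{b}G(x,\tau)y(\tau)\,d\tau+B\left(\ln\frac{x}{a}/\ln\frac{b}{a}\right)^{\sigma-1}$. To complete the argument I would also verify the converse, namely that this $u$ indeed solves (\ref{13}): applying $^{\mathcal{H}}{\mathfrak{D}}_{a}^{\sigma}$ and using that the Hadamard derivative inverts the Hadamard integral and annihilates $\left(\ln\frac{x}{a}\right)^{\sigma-1}$ (Lemma~\ref{l1}) returns the right-hand side $-y(x)$, and direct evaluation at $x=a$ and $x=b$ recovers the boundary values.

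The whole derivation is a routine but bookkeeping-heavy computation. The only point needing care is the justification that $u(a)=0$ eliminates the singular term $\left(\ln\frac{x}{a}\right)^{\sigma-2}$ — which is where the stated hypotheses on $y$ and the implicit requirement that $u$ be admissible (in particular continuous up to $x=a$) enter — and the splitting of the integral, where the factors $1/\tau$ must be tracked carefully so that the two branches of $G(x,\tau)$ come out exactly as in (\ref{15}).
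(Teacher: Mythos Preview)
Your proposal is correct and follows essentially the same route as the paper: apply $^{\mathcal{H}}{\mathcal{I}}_{a}^{\sigma}$, use the two boundary conditions to determine $c_{2}=0$ and $c_{1}$, then split $\int_{a}^{b}=\int_{a}^{x}+\int_{x}^{b}$ to read off $G(x,\tau)$. Your added justification for why $u(a)=0$ forces $c_{2}=0$ and your remark about verifying the converse are welcome elaborations that the paper omits, but the underlying argument is the same.
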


\begin{proof}
Applying the operator $^{\mathcal{H}}{\mathcal{I}}_{a}^{\sigma }$ on the equation $^{
\mathcal{H}}{\mathfrak{D}}_{a}^{\sigma }u(x)=-y(x)$, we get 
\begin{equation}
u(x)=-\dfrac{1}{\Gamma (\sigma )}\int_{a}^{x}\left( \ln \frac{x}{\tau }
\right) ^{\sigma -1}y(\tau )\,\frac{d\tau }{\tau }+c_{1}\left( \ln \frac{x}{a}\right) ^{\sigma -1}+c_{2}\left( \ln \frac{x}{a}\right) ^{\sigma -2},
\label{16}
\end{equation}
where $c_{1},c_{2}\in \mathds{R}$.\newline
Using the boundary conditions $u(a)=0$ and $u(b)=B,$ we get $c_{2}=0$ and 
\begin{equation*}
c_{1}=\dfrac{1}{\Gamma (\sigma )}\left( \ln \frac{b}{a}\right) ^{1-\sigma
}\int_{a}^{b}\left( \ln \frac{b}{\tau }\right) ^{\sigma -1}y(\tau )\frac{d\tau }{\tau }+B\left( \ln \frac{b}{a}\right) ^{1-\sigma }.
\end{equation*}
Substituting the values of $c_{1}$ and $c_{2}$ in (\ref{16}), we obtain 
\begin{align*}
u(x)& =\frac{1}{\Gamma (\sigma )}\left( \frac{\ln \frac{x}{a}}{\ln \frac{b}{a}}\right) ^{\sigma -1}\int_{a}^{b}\left( \ln \frac{b}{\tau }\right) ^{\sigma
-1}y(\tau )\frac{d\tau }{\tau }-\frac{1}{\Gamma (\sigma )}\int_{a}^{x}\left(
\ln \frac{x}{\tau }\right) ^{\sigma -1}y(\tau )\frac{d\tau }{\tau }+B\left( 
\frac{\ln \frac{x}{a}}{\ln \frac{b}{a}}\right) ^{\sigma -1} \\
& =\frac{1}{\Gamma (\sigma )}\int_{a}^{x}\left[ \left( \frac{\ln \frac{x}{a}
}{\ln \frac{b}{a}}\right) ^{\sigma -1}\left( \ln \frac{b}{\tau }\right)
^{\sigma -1}-\left( \ln \frac{x}{\tau }\right) ^{\sigma -1}\right] y(\tau )
\frac{d\tau }{\tau }+\frac{1}{\Gamma (\sigma )}\int_{x}^{b}\left( \frac{\ln 
\frac{x}{a}}{\ln \frac{b}{a}}\right) ^{\sigma -1} \\
& \times \left( \ln \frac{b}{\tau }\right) ^{\sigma -1}y(\tau )\frac{d\tau }{
\tau }+B\left( \frac{\ln \frac{x}{a}}{\ln \frac{b}{a}}\right) ^{\sigma -1} \\
& =\int_{a}^{b}G(x,\tau )\;d\tau +B\left( \frac{\ln \frac{x}{a}}{\ln \frac{b
}{a}}\right) ^{\sigma -1}.
\end{align*}
Hence, the proof is completed.
\end{proof}

\section{Main results}

This section is devoted to prove the main results of the problem (\ref{7}),
and present a lower bound of the eigenvalues for the eigenvalue problem (\ref
{v-p}).

\begin{lemma}
\label{l4} The Green's function $G$ defined in Lemma \ref{l3} has the
following property: 
\begin{equation}
\underset{x\in \lbrack a,b]}{\max }\int_{a}^{b}\left\vert G\left( x,\tau
\right) \right\vert d\tau =\frac{\left( \sigma -1\right) ^{\sigma -1}\left(
\ln \frac{b}{a}\right) ^{\sigma }}{\sigma ^{\sigma +1}\Gamma (\sigma )}.
\label{max.int}
\end{equation}
\end{lemma}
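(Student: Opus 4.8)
The plan is to remove the absolute value by first establishing that $G\ge0$ on $[a,b]\times[a,b]$, then to evaluate $\int_a^bG(x,\tau)\,d\tau$ in closed form, and finally to maximize the resulting one-variable function of $x$ by elementary calculus.

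\textbf{Step 1: nonnegativity of $G$.} On the region $a\le x\le\tau\le b$ every factor in the second branch of (\ref{15}) is nonnegative (recall $\sigma-1>0$), so $G\ge0$ there. On the region $a\le\tau\le x\le b$ I would set $s=\ln\frac{x}{a}$, $t=\ln\frac{\tau}{a}$, $L=\ln\frac{b}{a}$, so that $0\le t\le s\le L$, $\ln\frac{b}{\tau}=L-t$ and $\ln\frac{x}{\tau}=s-t$; the sign of $\Gamma(\sigma)\tau\,G(x,\tau)$ is then the sign of $\big(\tfrac{s}{L}(L-t)\big)^{\sigma-1}-(s-t)^{\sigma-1}$. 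The identity $\tfrac{s}{L}(L-t)-(s-t)=\tfrac{t(L-s)}{L}\ge0$ gives $\tfrac{s}{L}(L-t)\ge s-t\ge0$, and monotonicity of $u\mapsto u^{\sigma-1}$ on $[0,\infty)$ yields $G(x,\tau)\ge0$. Hence $\int_a^b|G(x,\tau)|\,d\tau=\int_a^bG(x,\tau)\,d\tau$.

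\textbf{Step 2: evaluating the integral.} Splitting $\int_a^bG(x,\tau)\,d\tau$ according to (\ref{15}), the term $\big(\tfrac{\ln(x/a)}{\ln(b/a)}\big)^{\sigma-1}\big(\ln\tfrac{b}{\tau}\big)^{\sigma-1}\tfrac1\tau$ occurs on all of $[a,b]$ while the term $-\big(\ln\tfrac{x}{\tau}\big)^{\sigma-1}\tfrac1\tau$ occurs only on $[a,x]$. The substitution $v=\ln\tfrac{b}{\tau}$ gives $\int_a^b\big(\ln\tfrac{b}{\tau}\big)^{\sigma-1}\tfrac{d\tau}{\tau}=\tfrac1\sigma\big(\ln\tfrac ba\big)^\sigma$, and similarly $\int_a^x\big(\ln\tfrac{x}{\tau}\big)^{\sigma-1}\tfrac{d\tau}{\tau}=\tfrac1\sigma\big(\ln\tfrac xa\big)^\sigma$, whence
\begin{equation*}
\int_a^bG(x,\tau)\,d\tau=\frac{1}{\sigma\Gamma(\sigma)}\Big(\big(\ln\tfrac xa\big)^{\sigma-1}\ln\tfrac ba-\big(\ln\tfrac xa\big)^{\sigma}\Big)=\frac{\big(\ln\tfrac xa\big)^{\sigma-1}\ln\tfrac bx}{\sigma\Gamma(\sigma)}.
\end{equation*}

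\textbf{Step 3: maximizing over $x$.} Put $s=\ln\tfrac xa\in[0,L]$ with $L=\ln\tfrac ba$, and maximize $\phi(s)=s^{\sigma-1}(L-s)$. Since $\phi(0)=\phi(L)=0$ and $\phi>0$ on $(0,L)$, the maximum occurs at the unique interior critical point $s_0=\tfrac{\sigma-1}{\sigma}L$, which lies in $(0,L)$ because $1<\sigma\le2$; there $L-s_0=\tfrac L\sigma$ and $\phi(s_0)=\tfrac{(\sigma-1)^{\sigma-1}}{\sigma^{\sigma}}L^{\sigma}$. Dividing by $\sigma\Gamma(\sigma)$ gives exactly (\ref{max.int}). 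The only mildly delicate point is the sign analysis in Step 1; everything after that is a routine computation, so I anticipate no serious obstacle. (Alternatively one could bypass Step 1 using Lemma \ref{l3} with $y\equiv1$, $B=0$: the function $x\mapsto\int_a^bG(x,\tau)\,d\tau$ solves $^{\mathcal{H}}{\mathfrak{D}}_{a}^{\sigma}u=-1$ with $u(a)=u(b)=0$, and a maximum-principle argument gives positivity — but the explicit inequality above is cleaner.)
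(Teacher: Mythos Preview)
Your proposal is correct and follows essentially the same approach as the paper: establish $G\ge0$, compute $\int_a^bG(x,\tau)\,d\tau$ explicitly, then maximize the resulting one-variable function by calculus. The only notable difference is that the paper outsources the nonnegativity of $G$ to an external reference (Lemma~4 of \cite{Zaid}), whereas you supply a clean self-contained argument via the substitution $s=\ln(x/a)$, $t=\ln(\tau/a)$, $L=\ln(b/a)$ and the identity $\tfrac{s}{L}(L-t)-(s-t)=\tfrac{t(L-s)}{L}\ge0$; your logarithmic change of variable in Step~3 also streamlines the optimization slightly, but the substance is identical.
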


\begin{proof}
From Lemma$\ $4 of \cite{Zaid}, we have $G(x,\tau )\geq 0$\ for all $(x,\tau
)\in \lbrack a,b]\times \lbrack a,b].$ Therefore,
\begin{eqnarray*}
\Gamma (\sigma )\int_{a}^{b}|G\left( x,\tau \right) |d\tau  &=&\int_{a}^{x}
\left[ \left( \frac{\ln \frac{x}{a}}{\ln \frac{b}{a}}\right) ^{\sigma
-1}\left( \ln \frac{b}{\tau }\right) ^{\sigma -1}-\left( \ln \frac{x}{\tau }
\right) ^{\sigma -1}\right] \frac{d\tau }{\tau } \\
&&+\int_{x}^{b}\left( \frac{\ln \frac{x}{a}}{\ln \frac{b}{a}}\right)
^{\sigma -1}\left( \ln \frac{b}{\tau }\right) ^{\sigma -1}\frac{d\tau }{\tau 
} \\
&=&\left( \frac{\ln \frac{x}{a}}{\ln \frac{b}{a}}\right) ^{\sigma
-1}\int_{a}^{b}\left( \ln \frac{b}{\tau }\right) ^{\sigma -1}\frac{d\tau }{
\tau }-\int_{a}^{x}\left( \ln \frac{x}{\tau }\right) ^{\sigma -1}\frac{d\tau 
}{\tau } \\
&=&\frac{1}{\sigma }\left( \ln \frac{b}{a}\right) ^{1-\sigma }\left( \ln 
\frac{x}{a}\right) ^{\sigma -1}\left( \ln \frac{b}{a}\right) ^{\sigma }-
\frac{1}{\sigma }\left( \ln \frac{x}{a}\right) ^{\sigma },
\end{eqnarray*}
which yields 
\begin{equation}
\Gamma (\sigma +1)\int_{a}^{b}G\left( x,\tau \right) d\tau =\left( \ln \frac{b}{a}\right) \left( \ln \frac{x}{a}\right) ^{\sigma -1}-\left( \ln \frac{x}{a}\right) ^{\sigma }.  \label{31a}
\end{equation}%
It follows that we need to get the maximum value of the function
\begin{equation}
g(x)=\left( \ln \frac{b}{a}\right) \left( \ln \frac{x}{a}\right) ^{\sigma
-1}-\left( \ln \frac{x}{a}\right) ^{\sigma },\ \ x\in \lbrack a,b].
\label{202}
\end{equation}%
Observe that $g(x)\geq 0$ for all $x\in \lbrack a,b],$ and $g(a)=g(b)=0.$ 
\newline
Now we differentiate $g(x)\ $\ on $(a,b)$ to get 
\begin{equation*}
g^{\prime }(x)=\frac{\left( \sigma -1\right) }{x}\left( \ln \frac{b}{a}%
\right) \left( \ln \frac{x}{a}\right) ^{\sigma -2}-\frac{\sigma }{x}\left(
\ln \frac{x}{a}\right) ^{\sigma -1},
\end{equation*}%
from which follows that $g^{\prime }(x^{\ast })=0$ has a unique zero,
attained at the point 
\begin{equation*}
x^{\ast }=a\left( \frac{b}{a}\right) ^{\left( \sigma -1\right) /\sigma }.
\end{equation*}%
It is easily seen that $x^{\ast }\in (a,b).$\ Because $g(x)$ is continuous
function and $x^{\ast }\in (a,b)$, we conclude that 
\begin{eqnarray}
\underset{x\in \lbrack a,b]}{\max }g(x) &=&g(x^{\ast })  \notag \\
&=&\left( \ln \frac{b}{a}\right) \left( \ln \left( \frac{b}{a}\right)
^{\left( \sigma -1\right) /\sigma }\right) ^{\sigma -1}-\left( \ln \left( 
\frac{b}{a}\right) ^{\left( \sigma -1\right) /\sigma }\right) ^{\sigma } 
\notag \\
&=&\frac{1}{\sigma -1}\left( \frac{\sigma -1}{\sigma }\ln \frac{b}{a}\right)
^{\sigma }  \notag \\
&=&\frac{\left( \sigma -1\right) ^{\sigma -1}\left( \ln \frac{b}{a}\right)
^{\sigma }}{\sigma ^{\sigma }}.  \label{max.g}
\end{eqnarray}%
By (\ref{31a}), (\ref{202}) and (\ref{max.g}) we get the formula (\ref%
{max.int}). The proof is completed.$\ $
\end{proof}

\begin{theorem}
\label{Th.4} \label{Th1} Let ${\mathcal{F}}:[a,b]\times \mathds{R}\rightarrow \mathds{R}$ be a continuous function satisfying the assumption $(H_{1}).$ If 
\begin{equation}
\frac{b}{a}<\exp \left( \frac{\sigma ^{(\sigma +1)/\sigma }\Gamma ^{1/\sigma
}(\sigma )}{\left( \sigma -1\right) ^{(\sigma +1)/\sigma }K^{1/\sigma }}%
\right) .  \label{201}
\end{equation}%
Then the Fractional boundary value problem (\ref{7}) has a unique solution
on $[a,b].$

\begin{proof}
Let $E=C\left( [a,b],\mathds{R}\right) $ be the Banach space endowed with the norm $\left\Vert u\right\Vert
=\sup_{x\in \lbrack a,b]}\left\vert u(x)\right\vert$ (see Proposition 2.18 in \cite{chapter2}), and we define the
operator $\mathfrak{R}:E\rightarrow E$ $\ $by 
\begin{equation*}
\mathfrak{R}u(x)=\int_{a}^{b}G(x,\tau )u(\tau )d\tau +B\left( \frac{\ln 
\frac{x}{a}}{\ln \frac{b}{a}}\right) ^{\sigma -1},
\end{equation*}%
where the function $G$ is given by (\ref{15}).\newline
Notice that the prolem (\ref{7}) has a solution $u$ if only if $u$ is fixed
point of the operator $\mathfrak{R}$.\newline
For all $(x,u),(x,v )\in \lbrack a,b]\times E,$ we have 
\begin{eqnarray*}
\left\vert \mathfrak{R}u\left( x\right) -\mathfrak{R}v\left( x\right)
\right\vert &\leq &\int_{a}^{b}G(x,\tau )\left\vert {\mathcal{F}}(\tau
,u(\tau ))-{\mathcal{F}}(\tau ,v(\tau ))\right\vert d\tau \\
&\leq &\int_{a}^{b}KG(x,\tau )\left\vert u\left( \tau \right) -v\left( \tau
\right) \right\vert d\tau \\
&\leq &K\int_{a}^{b}G\left( x,\tau \right) d\tau \left\Vert u-v\right\Vert ,
\end{eqnarray*}%
using the formula (\ref{max.int}) yields\ 
\begin{equation*}
\left\Vert \mathfrak{R}u-\mathfrak{R}v\right\Vert \leq \frac{K\left( \sigma
-1\right) ^{\sigma -1}\left( \ln \frac{b}{a}\right) ^{\sigma }}{\sigma
^{\sigma +1}\Gamma (\sigma )}\left\Vert u-v\right\Vert .
\end{equation*}%
Can be easily check that the assumption (\ref{201}) leads to principle of
contraction mapping. Hence, the operator $\mathfrak{R}$\ is contraction
mapping, we conclude that the problem (\ref{7}) has a unique solution.
\end{proof}
\end{theorem}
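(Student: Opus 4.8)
The plan is to recast the nonlinear problem (\ref{7}) as a fixed-point equation and apply the Banach contraction principle, exactly along the lines already used for the linear problem in Lemma \ref{l3}.

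First I would work in the Banach space $E=C([a,b],\mathds{R})$ equipped with the supremum norm $\|u\|=\sup_{x\in[a,b]}|u(x)|$, and introduce the operator $\mathfrak{R}:E\to E$ defined by
\[
\mathfrak{R}u(x)=\int_{a}^{b}G(x,\tau)\,{\mathcal{F}}(\tau,u(\tau))\,d\tau+B\Big(\tfrac{\ln\frac{x}{a}}{\ln\frac{b}{a}}\Big)^{\sigma-1},
\]
with $G$ the Green's function (\ref{15}). Since ${\mathcal{F}}$ is continuous and $G(x,\cdot)$ is integrable on $[a,b]$, $\mathfrak{R}$ maps $E$ into $E$, and by Lemma \ref{l3} applied with $y(\tau)={\mathcal{F}}(\tau,u(\tau))$, a function $u\in E$ solves (\ref{7}) if and only if $\mathfrak{R}u=u$. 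Thus it suffices to prove that $\mathfrak{R}$ has a unique fixed point.

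Next, for $u,v\in E$ and any $x\in[a,b]$, the Lipschitz hypothesis $(H_{1})$ together with the nonnegativity of $G$ (Lemma~4 of \cite{Zaid}) gives
\[
|\mathfrak{R}u(x)-\mathfrak{R}v(x)|\le\int_{a}^{b}G(x,\tau)\,|{\mathcal{F}}(\tau,u(\tau))-{\mathcal{F}}(\tau,v(\tau))|\,d\tau\le K\Big(\int_{a}^{b}G(x,\tau)\,d\tau\Big)\|u-v\|.
\]
Taking the supremum over $x\in[a,b]$ and inserting the sharp identity (\ref{max.int}) from Lemma \ref{l4} yields
\[
\|\mathfrak{R}u-\mathfrak{R}v\|\le\frac{K\left(\sigma-1\right)^{\sigma-1}\big(\ln\frac{b}{a}\big)^{\sigma}}{\sigma^{\sigma+1}\Gamma(\sigma)}\,\|u-v\|.
\]

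Finally I would observe that the contraction constant $L:=K(\sigma-1)^{\sigma-1}(\ln\frac{b}{a})^{\sigma}/(\sigma^{\sigma+1}\Gamma(\sigma))$ satisfies $L<1$ precisely when $(\ln\frac{b}{a})^{\sigma}<\sigma^{\sigma+1}\Gamma(\sigma)/(K(\sigma-1)^{\sigma-1})$, that is, when $\ln\frac{b}{a}$ lies below the $\sigma$-th root of the right-hand side, which upon exponentiating is exactly the standing hypothesis (\ref{201}). Consequently $\mathfrak{R}$ is a contraction on the complete metric space $E$, and the Banach fixed-point theorem provides a unique fixed point, i.e.\ a unique solution of (\ref{7}) on $[a,b]$. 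There is no real obstacle in this argument: the only delicate point — computing the exact value of $\max_{x}\int_{a}^{b}G(x,\tau)\,d\tau$, which is what makes the estimate \emph{sharp} — has already been carried out in Lemma \ref{l4}; what remains here is the routine contraction estimate and the elementary algebraic verification that (\ref{201}) is equivalent to $L<1$.
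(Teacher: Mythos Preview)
Your proposal is correct and follows exactly the same route as the paper: reformulate (\ref{7}) as a fixed-point problem for the Green-function operator on $C([a,b])$, use $(H_1)$ and the nonnegativity of $G$ to get a Lipschitz estimate, plug in the sharp value (\ref{max.int}) from Lemma~\ref{l4}, and observe that (\ref{201}) is algebraically equivalent to the contraction constant being $<1$. If anything, your write-up is slightly cleaner than the paper's, since you correctly include ${\mathcal F}(\tau,u(\tau))$ in the definition of $\mathfrak{R}$ (the paper's displayed definition of $\mathfrak{R}$ omits ${\mathcal F}$, an evident typo) and you spell out the algebraic passage from $L<1$ to the exponential bound on $b/a$.
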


Now we present a lower bound of the eigenvalues for the eigenvalue problem (%
\ref{v-p}).

\begin{theorem}
\label{Th2} If the eigenvalue problem (\ref{v-p}) has a non-trivial
continuous solution, then 
\begin{equation}
|\lambda |\geq \frac{\sigma ^{\sigma +1}\Gamma (\sigma )}{\left( \sigma
-1\right) ^{\sigma -1}\left( \ln \frac{b}{a}\right) ^{\sigma }},  \label{52}
\end{equation}
\end{theorem}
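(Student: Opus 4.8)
The plan is to recognize the eigenvalue problem (\ref{v-p}) as the special instance of the linear boundary value problem (\ref{13}) obtained by taking $y(x)=-\lambda u(x)$ and $B=0$: a continuous solution of (\ref{v-p}) satisfies $^{\mathcal{H}}{\mathfrak{D}}_{a}^{\sigma}u(x)=\lambda u(x)=-y(x)$ together with $u(a)=u(b)=0$. Hence Lemma \ref{l3} is applicable and gives the integral representation
\begin{equation*}
u(x)=-\lambda\int_{a}^{b}G(x,\tau)\,u(\tau)\,d\tau,\qquad x\in[a,b],
\end{equation*}
with $G$ the Green's function (\ref{15}). This reduces the spectral claim to a norm estimate that I can close using Lemma \ref{l4}.

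First I would fix a non-trivial continuous eigenfunction $u$ and note that, since $[a,b]$ is compact with $a>0$, such a $u$ lies in $C([a,b],\mathds{R})\cap L^{1}([a,b],\mathds{R})$, so Lemma \ref{l3} indeed applies; I would also record the sign property $G(x,\tau)\ge 0$ on $[a,b]\times[a,b]$ used in the proof of Lemma \ref{l4}, so that $|G|=G$. Taking absolute values in the representation above and estimating under the integral gives, for every $x\in[a,b]$,
\begin{equation*}
|u(x)|\le |\lambda|\int_{a}^{b}G(x,\tau)\,|u(\tau)|\,d\tau\le |\lambda|\,\|u\|\int_{a}^{b}G(x,\tau)\,d\tau .
\end{equation*}
Taking the supremum over $x\in[a,b]$ and inserting the sharp identity (\ref{max.int}) of Lemma \ref{l4} yields
\begin{equation*}
\|u\|\le |\lambda|\,\|u\|\,\frac{(\sigma-1)^{\sigma-1}\bigl(\ln\frac{b}{a}\bigr)^{\sigma}}{\sigma^{\sigma+1}\Gamma(\sigma)} .
\end{equation*}

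Since $u$ is non-trivial, $\|u\|>0$, so I may divide through by $\|u\|$ and rearrange to obtain exactly (\ref{52}); in particular the inequality forces $\lambda\neq 0$, consistently with the fact that $\lambda=0$ would reduce (\ref{v-p}) to $^{\mathcal{H}}{\mathfrak{D}}_{a}^{\sigma}u=0$ with homogeneous boundary data, whose only solution by Lemma \ref{l1} is $u\equiv 0$. I do not anticipate any genuine obstacle: the whole analytic weight rests on Lemma \ref{l4}, and what remains is the passage to the Green's-function representation, the nonnegativity of $G$, and a one-line division by $\|u\|$. The only points deserving explicit mention are that a continuous eigenfunction belongs to the function class required by Lemma \ref{l3}, and that non-triviality legitimately gives $\|u\|\neq 0$ so that the division is valid.
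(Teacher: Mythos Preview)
Your argument is correct and follows essentially the same route as the paper: both invoke Lemma \ref{l3} (with $B=0$) to pass to the Green's-function representation, take sup-norms, apply the sharp bound of Lemma \ref{l4}, and divide by $\|u\|\neq 0$. If anything, your write-up is slightly more careful about the sign in the representation and about checking the hypotheses of Lemma \ref{l3}, but the substance is identical.
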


\begin{proof}
From Lemma \ref{l3}, the solution of the problem (\ref{v-p}) can be written
as follows%
\begin{equation*}
u\left( x\right) =\int_{a}^{b}\lambda G\left( x,\tau \right) u\left( \tau
\right) d\tau .
\end{equation*}

which yields%
\begin{equation*}
\left\Vert u\right\Vert \leq |\lambda |\left\Vert u\right\Vert \underset{%
x\in \lbrack a,b]}{\max }\int_{a}^{b}\left\vert G\left( x,\tau \right)
\right\vert d\tau
\end{equation*}%
Since $u$ is non-trivial, then $\left\Vert u\right\Vert \neq 0.$ So, using
now to the formula of the Green function $G$ proved in Lemma \ref{l4}, we
get 
\begin{equation*}
1\leq |\lambda |\underset{x\in \lbrack a,b]}{\max }\int_{a}^{b}\left\vert
G\left( x,\tau \right) \right\vert d\tau =|\lambda |\frac{\left( \sigma
-1\right) ^{\sigma -1}\left( \ln \frac{b}{a}\right) ^{\sigma }}{\sigma
^{\sigma +1}\Gamma (\sigma )},
\end{equation*}%
from which the inequality (\ref{52}) follows. The proof is completed.
\end{proof}

\bigskip

We have the following result about the nonexistence for solutions of the
boundary value problem (\ref{v-p}).

\begin{corollary}
\label{Cor1}If%
\begin{equation}
|\lambda |<\frac{\sigma ^{\sigma +1}\Gamma (\sigma )}{\left( \sigma
-1\right) ^{\sigma -1}\left( \ln \frac{b}{a}\right) ^{\sigma }},  \label{53}
\end{equation}%
Then the boundary value\ problem (\ref{v-p}) has no non-trivial solution.
\end{corollary}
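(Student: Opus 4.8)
The plan is to observe that Corollary \ref{Cor1} is nothing more than the contrapositive of Theorem \ref{Th2}, so the proof should be a two-line deduction. First I would suppose, for contradiction, that the eigenvalue problem (\ref{v-p}) admits a non-trivial continuous solution $u$ for some $\lambda$ satisfying (\ref{53}). Then Theorem \ref{Th2} applies verbatim and forces
\begin{equation*}
|\lambda|\geq \frac{\sigma^{\sigma+1}\Gamma(\sigma)}{(\sigma-1)^{\sigma-1}\left(\ln\frac{b}{a}\right)^{\sigma}},
\end{equation*}
which directly contradicts the assumed strict inequality (\ref{53}). Hence no such non-trivial solution can exist.

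Alternatively, and essentially equivalently, I would unfold the argument once more for self-containedness: by Lemma \ref{l3} any solution of (\ref{v-p}) satisfies $u(x)=\lambda\int_a^b G(x,\tau)u(\tau)\,d\tau$, so taking the supremum norm and using Lemma \ref{l4} gives $\|u\|\leq |\lambda|\,\|u\|\,\frac{(\sigma-1)^{\sigma-1}(\ln\frac{b}{a})^{\sigma}}{\sigma^{\sigma+1}\Gamma(\sigma)}$; if $u$ were non-trivial we could divide by $\|u\|\neq 0$ and obtain $1\leq |\lambda|\,\frac{(\sigma-1)^{\sigma-1}(\ln\frac{b}{a})^{\sigma}}{\sigma^{\sigma+1}\Gamma(\sigma)}$, again contradicting (\ref{53}).

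There is no real obstacle here; the only point worth a word of care is that the notion of \emph{solution} in the statement must be read as \emph{continuous solution}, to match the hypothesis under which Theorem \ref{Th2} was established (and under which Lemma \ref{l3} gives the integral representation). With that understood, the corollary is immediate and requires no further computation.
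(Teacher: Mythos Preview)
Your proof is correct and matches the paper's own argument essentially verbatim: assume a non-trivial solution exists, invoke Theorem~\ref{Th2} to obtain (\ref{52}), and observe the contradiction with (\ref{53}). The additional unfolded version and the remark about continuity are fine but not needed beyond what the paper does.
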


\begin{proof}
Assume the contrary. Then the boundary value problem (\ref{v-p}) has a
non-trivial solution $u.$ By Theorem \ref{Th2}, inequality (\ref{52}) holds.
This contradicts assumption (\ref{53}). The proof is completed.
\end{proof}

\begin{example}
We consider the following Hadamard frational\ boundary value problem%
\begin{equation}
\left\{ 
\begin{array}{l}
^{\mathcal{H}}{\mathfrak{D}}_{a}^{{3/2}}u(x)=(x-1)^{2}+\sqrt{x-1+u^{2}(x)},%
\text{ }1<x<e, \\ 
u(1)=0,\text{ }u(e)=1,%
\end{array}%
\right.   \label{401}
\end{equation}%
where $e$ is an irrational number and it's defined by the infinite series $e=\sum_{k=0}^{+\infty }\frac{1}{k!}$ and approximately equal to $2.718281828459.
$\newline
Here $\sigma =\frac{3}{2}$\ and ${\mathcal{F}}(x,u)=(x-1)^{2}+\sqrt{x-1+u^{2}%
}.$ For all $(x,u)\in (1,e]\times \mathds{R},$\ we have 
\begin{eqnarray*}
\left\vert \partial _{u}{\mathcal{F}}(x,u)\right\vert  &=&\frac{|u|}{\sqrt{%
x-1+u^{2}}} \\
&\leq &1.
\end{eqnarray*}%
Choose $K=1.$ So, by using the given values we get%
\begin{equation*}
\exp \left( \frac{\sigma ^{(\sigma +1)/\sigma }\Gamma ^{1/\sigma }(\sigma )}{%
\left( \sigma -1\right) ^{(\sigma +1)/\sigma }K^{1/\sigma }}\right) =\exp
\left( \frac{3}{4}\left( 9\pi \right) ^{1/3}\right) >e.
\end{equation*}%
Then the inequality (\ref{201}) is satisfied. Hence, by Theorem \ref{Th1},
we conclude that the Hadamard fractional boundary value problem (\ref{401})
has a unique solution on the interval $[1,e].$
\end{example}

\begin{example}
Consider the following eigenvalue problem%
\begin{equation}
\left\{ 
\begin{array}{l}
^{\mathcal{H}}{\mathfrak{D}}_{a}^{{3/2}}u(x)=\lambda u(x),\text{ }1<x<e, \\ 
u(1)=0=u(e),%
\end{array}%
\right.  \label{Pb.ex2}
\end{equation}%
Here $\sigma =\frac{3}{2},$ and$\ [a,b]=[1,e]${\small $.$ }So, we obtain%
\begin{equation}
\frac{\sigma ^{\sigma +1}\Gamma (\sigma )}{\left( \sigma -1\right) ^{\sigma
-1}\left( \ln \frac{b}{a}\right) ^{\sigma }}=\frac{9\sqrt{3\pi }}{8},
\end{equation}%
By Theorem \ref{Th2}, we conclude that: If $\lambda $ is an eigenvalue of
the problem (\ref{Pb.ex2}), we must have $\left\vert \lambda \right\vert
\geq 9\sqrt{3\pi }/8.$
\end{example}

\smallskip 

\textbf{Acknowledgment.} The authors would like to thank the anonymous referees for their useful remarks that led that improved the paper.

\smallskip

\end{document}